\definecolor{munsell}{rgb}{0.0, 0.5, 0.69}
\theoremstyle{plain}
\newtheorem{theorem}{Theorem}[section]
\theoremstyle{definition}
\newcommand{\B}{{\mathcal B}}
\newcommand{\area}{{\texttt{area}}}
\newcommand{\sper}{{\texttt{sper}}}
\newcommand{\inn}{{\texttt{inn}}}
\title{Generating Trees and Fibonacci Polyominoes}
\date{\today}
\subjclass[2010]{05A15, 05A05, 	11B39}
\keywords{Fibonacci word, polyomino, generating function.}
\begin{document}

\author[J. F. Pulido]{Juan F. Pulido}
\address{Departamento de Matem\'aticas,  Universidad Nacional de Colombia,  Bogot\'a, Colombia}
\email{jupulidom@unal.edu.co}

\author[J. L. Ram\'{\i}rez]{Jos\'e L. Ram\'{\i}rez}
\address{Departamento de Matem\'aticas,  Universidad Nacional de Colombia,  Bogot\'a, Colombia}
\email{jlramirezr@unal.edu.co}

\author[A R. Vindas-Meléndez]{Andrés R. Vindas-Meléndez}
\address{Department of Mathematics, Harvey Mudd College, Claremont, CA, USA}
\email{arvm@hmc.edu}

\begin{abstract}
We study a new class of polyominoes, called  $p$-Fibonacci polyominoes, defined using $p$-Fibonacci words. 
We enumerate these polyominoes by applying  generating functions to capture geometric parameters such as area, semi-perimeter, and the number of inner points.  Additionally, we establish bijections between Fibonacci polyominoes, binary  Fibonacci words, and integer compositions with certain restrictions.
\end{abstract}

\maketitle

\section{Introduction}\label{intro}
The \emph{Fibonacci numbers} $F_n$  are among the most famous sequences in combinatorics and number theory. 
This sequence is defined recursively as \[F_{n} = F_{n-1} + F_{n-2} \text{ for all } n \geq 2,\] with initial values $F_0 = 0$ and $F_1 = 1$.  
A natural way to generalize this sequence is to consider a recursion where each term is the sum of the previous $p$ terms.   
For a positive integer $p$, the \emph{$p$-generalized Fibonacci numbers}, denoted by $F_{p,n}$, are  defined by the recursion:
    \[ F_{p,n}=    
        \sum_{i=1}^{p}F_{p,n-i}, \quad  n>1,
     \]
with the initial values $F_{p,n}=0$ for  $n=-p+2,-p+1,\dots, 0$, and $F_{p,n}=1$ for $n=1$. 
The inclusion of negative indices is meant to simplify the computation of $F_{p,n}$ for $n=2,3,\dots,p$. 

For $1<n\leq p$, the term $F_{p,n}$ is given by $2^{n-2}$. 
In particular, when $p=2$, we recover the classical Fibonacci sequence.
The $p$-generalized Fibonacci numbers have several combinatorial interpretations. 
For example, the  binary words of length $n$ that avoid  $p$ occurrences of the symbol $1$ are enumerated by   $F_{p,n+2}$ (cf. \cite{Bernini}). 
Another object enumerated by the $p$-generalized   Fibonacci sequence can be derived from generating trees.

A \emph{generating tree} is a planar, rooted, and labelled tree equipped with a \emph{production system} of labels
\[ \Omega = \begin{cases}
    (s_0) \\
    (k) \leadsto \left(e_1(k)\right), \left(e_2(k)\right), \ldots, \left(e_k(k)\right).
\end{cases} \]
The generating tree induced by the production system $\Omega$ consists of a root labeled with $s_0$, and each node labeled with $k$ generates $k$ new nodes labeled by $e_1(k),\dots, e_k(k)$.  
For example, consider the production system $\Omega_3$, given by 
\[\Omega_3=\begin{cases}(2_2)&  \\ (2_2)\leadsto (2_2)(2_1)  \\
(2_1)\leadsto (2_2)(1)\\ 
(1)\leadsto (2_2). \end{cases}\]
The generating tree induced by $\Omega_3$ has the vertex $2_2$ as its root.
If a vertex is labeled $2_2$, it generates two new nodes with labeled by $2_2$ and $2_1$. 
If a vertex is labeled $2_1$, it generates two new nodes  labeled by $2_2$ and $1$. 
Finally, if a vertex is labeled $1$, it generated one new node labeled by $2_2$.  
Figure \ref{Fibonaccistrings} illustrates the first few levels of this generating tree. 
\begin{figure}[ht!]
\centering
\includegraphics{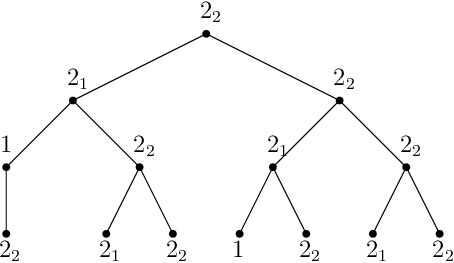}
    \caption{Generating tree induced by $\Omega_3$.}
    \label{Fibonaccistrings}
\end{figure}
 
 A production system $\Omega$ induces a sequence of positive integers $(a_n)_{n\geq 0}$, where $a_n$ is the number of nodes at level  $n$ in the generating tree.
 For our example, the first few values of the sequence $a_n$ are $1, 2, 4, 7, 13, \dots$. 
 In general, this sequence corresponds to the 3-generalized Fibonacci numbers, also known as the tribonacci numbers (sequence A000073 in the OEIS \cite{OEIS}). 
 The systematic study of generating trees and their applications was initiated by Barcucci et al.\  \cite{Barcucci}.

The production system $\Omega_3$ can be generalized as follows:
       \begin{equation}
    \Omega_p =\begin{cases}
    (2_{p-1})\\
    \left(2_{p-1}\right) \leadsto  \left(2_{p-1}\right)\left(2_{p-2}\right) \\
    \left(2_{p-2}\right) \leadsto  \left(2_{p-1}\right)\left(2_{p-3}\right) \\
    \quad \vdots \\
    \left(2_1\right) \leadsto  \left(2_{p-1}\right)\left(1\right) \\
    \left(1\right) \leadsto  \left(2_{p-1}\right).
    \label{omega_p}
\end{cases} 
\end{equation}
Baril and Do \cite{Baril} proved that the sequence associated with $\Omega_p$ encodes the $p$-generalized Fibonacci sequence $F_{p,n}$. 
The production system $\Omega_p$ allows  for the  construction of a new family of $p$-ary words, which are enumerated by the $p$-generalized Fibonacci numbers.
A $p$-ary word $u=u_1u_2\cdots u_n$ is called a \emph{$p$-Fibonacci word} of length $n$  if $u_1=p$ and for $1\leq i \leq n-1$: 
\begin{equation}
u_{i+1} = 
\begin{cases}
p, & \text{if } u_i = 1, \\
u_i-1 \text{ or } p, & \text{if } 2 \leq u_i \leq p.
\end{cases}
\label{constructionrules}
\end{equation}

We let $|w|$ denote the length of a word $w$, that is, the number of symbols of $w$.
We denote by $\epsilon$ the \emph{empty word}, that is the unique word of length zero.
For $n\geq 0$, let $\mathcal{W}_n^{(p)}$ denote the set of $p$-Fibonacci words of length $n$, and define   $\mathcal{W}^{(p)}:=\bigcup_{n\geq0}\mathcal{W}_n^{(p)}$. 
For example,
\begin{align*}
\mathcal{W}_4^{(3)}=\left\{3213,\, 3232,\, 3233,\, 3321,\, 3323,\, 3323,\, 3332,\, 3333\right\}.
\end{align*}
Note that the generating tree induced by $\Omega_p$ can be encoded by the $p$-Fibonacci words. 
Specifically, the words of length $n$ correspond to the nodes at level $n$ of the tree.
For example, Figure \ref{generatingtree3wors} illustrates the construction of the $3$-Fibonacci words.

\begin{figure}[ht!]
\centering
\includegraphics{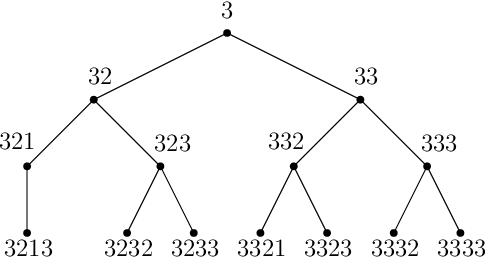}
    \caption{Initial levels of the planar tree associated to the words of $\mathcal{W}^{(3)}.$}
    \label{generatingtree3wors}
\end{figure}

Recently, the combinatorial study of different families of words  has been explored through their interpretation as polyominoes. 
A \emph{polyomino} is a finite collection of unit squares connected along their edges.
The connection between these objects is important because, although polyominoes are simple to describe, their combinatorics can be quite challenging. 
For instance, there is no known closed formula for the total number of polyominoes with a fixed area \cite{Book1}. 
This complexity has led to the study of subfamilies of polyominoes with more structure, which, in some cases, simplifies  their enumeration. 
Examples of such work include studies on $k$-ary words \cite{BBKT2, ManSha}, compositions \cite{MansourB}, Fibonacci words \cite{KirRam},  Catalan and Motzkin words \cite{Baril2, CallManRam}, permutations \cite{BBKT3},  and  others.

Research on polyominoes has primarily focused on geometric parameters, such as area, perimeter, and the number of lattice points. 
Each $p$-generalized Fibonacci word $u=u_1u_2\cdots u_n$ induces a polyomino (also called \emph{bargraph}) with $n$ columns, where the value of the symbol $u_i$ represents the number of cells in the $i$-th column. 
This new family of polyominoes is called \emph{$p$-Fibonacci polyominoes}. 
By construction, the $p$-Fibonacci polyominoes are enumerated by the $p$-generalized Fibonacci numbers $F_{p,n+1}$.
Figure \ref{poliminoes4} illustrates the $3$-Fibonacci polyominoes of length 4.

\begin{figure}[ht!]
\centering
\includegraphics{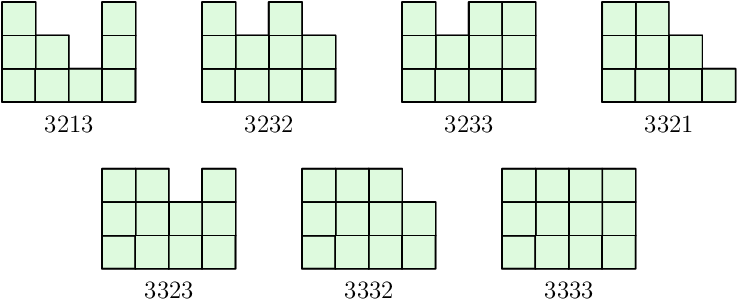}
    \caption{The $3$-Fibonacci polyominoes of length 4.}
    \label{poliminoes4}
\end{figure}

The main purpose of this paper is to enumerate the $p$-Fibonacci polyominoes  based on their length (number of columns) and geometric parameters, including area, semi-perimeter, and the number of inner points. 
We make use of generating functions to describe our results. 
We also introduce a novel method to calculate the generalized $p$-Fibonacci numbers using the three aforementioned parameters on polyominoes, namely: area, semi-perimeter, and number of inner points.

\section{Enumerating Area, Perimeter, and Lattice Points}

The area, perimeter, and number of lattice points are three fundamental parameters in the study of polyominoes. Let $P$ be a $p$-Fibonacci polyomino. 
The statistic $\area(P)$  represents the number cells comprising $P$. 
The statistic $\sper(P)$  denotes half of the perimeter of $P$, also known as the semiperimeter. 
Observe that the perimeter of a $p$-Fibonacci polyomino is always an even number.
An \emph{inner point} of $P$ is a point that is not on the perimeter of the polyomino; in other words, it is a point that belongs to exactly four cells of the polyomino.
Let $\inn(P)$ denote the number of inner points of $P$. 
For example, the $4$-Fibonacci polyomino $P$ shown in Figure \ref{parameters} satisfies $\area(P)=23$ (number of green cells), $\sper(P)=17$ (half of the perimeter), and $\inn(P)=7$ (number of blue dots).

\begin{figure}[ht]
\centering
\includegraphics{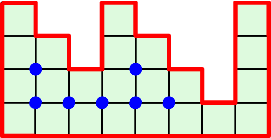}
\caption{The area, perimeter, and inner points of a polyomino.}
\label{parameters}
\end{figure}

We can interpret our polyominoes (bargraphs) as polygons. 
The area, perimeter, and number of lattice points of a polygon are connected by Pick's theorem \cite{Pick}. 
Let $T$  be a simple polygon (non-self-intersecting) with integer coordinates for its vertices. Then, the following relationship holds:
  \[
 \area(T) = \texttt{int}(T) + \frac{\texttt{ext}(T)}{2} - 1,
 \]
  where $\texttt{int}(T)$ and $\texttt{ext}(T)$ represent the number of interior and boundary lattice points of $T$, respectively.  Let $a_p(n), s_p(n)$, and $i_p(n)$ denote the total area, semi-perimeter, and number of inner points, respectively, within all  $p$-Fibonacci polyominoes with  $n$ columns.   Using Pick's theorem and summing over  all polyominoes with a fixed number of columns, we obtain the following relation:
  \[
a_p(n) = i_p(n) + s_p(n) - F_{p,n+1}.
 \]
Thus, the $p$-generalized Fibonacci numbers can be calculated by the interesting expression:
  \begin{align}\label{EqFibo}
 F_{p,n+1} = i_p(n) + s_p(n) - a_p(n), \quad n\geq 1.        \end{align}
As an example, the 4-Fibonacci number $F_{4,6}=15$ can be calculated from \eqref{EqFibo} as follows
\[F_{4,6}=15=i_4(5)+s_4(5)-a_4(5)=124+152-261.\]

In what follows, we derive the generating functions for these sequences.

\subsection{A decomposition for the Fibonacci polyominoes}

Define $\B^{(p)}_n$ as the set of $p$-Fibonacci polyominoes with $n$ columns, and let $\B^{(p,i)}_n$ denote the subset of $p$-Fibonacci polyominoes with $n$ columns where the last column contains $i$ cells. 
It follows that  \[\B^{(p)}_n=\bigcup_{i=1}^{p} \B^{(p,i)}_n.\] 
Let $\B^{(p)}$ be the set of all $p$-Fibonacci polyominoes, that is $\B^{(p)}=\bigcup_{n\geq 0} \B^{(p)}_n$. We define $\B^{(p)}_0=\{\epsilon\}$, where $\epsilon$ denotes the empty polyomino (or empty word).

To construct, or equivalently, to decompose $B_n^{(p)}$, we must determine what happens at each $B_n^{(p,i)}$ for $1\leq i \leq p$.
Let  $P$ be a polyomino in  $\B^{(p,i)}_n$. 
If $n=1$, then $P$ is a column with $p$ cells.
For $n>1$, there are two cases to consider:

\begin{enumerate}
\item[\textbf{Case 1},] $1\leq i \leq p-1$.
In this case, the polyomino  $P$ starts with a column of $p$ cells, followed by a non-empty polyomino $P'$ in $\B^{(p,i+1)}_{n-1}$. 
See Figure \ref{p1} (left) for a  representation of this decomposition.

  \item[\textbf{Case 2},] $i=p$. 
Here, the polyomino  $P$  starts with a column of $p$ cells, followed by a polyomino $P'$ in $\B^{(p,j)}_{n-1}$, for $1\leq j \leq p$. 
Refer to Figure \ref{p1} (right) for an illustration of this case.
\begin{figure}[ht!]
    \centering
  \includegraphics{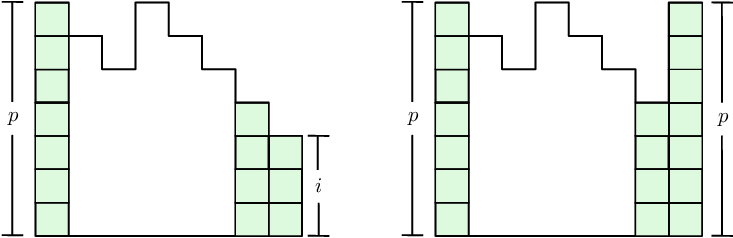}
        \caption{Decomposition of a $p$-Fibonacci polyomino.}
    \label{p1}
\end{figure}
\end{enumerate}

\subsection{Area and semi-perimeter.}

Define the multivariate generating function
\begin{equation}\label{def:multiv_gen_fun}
F_p(x;y,z):=\sum_{n\geq 0} x^n \sum_{P\in \B^{(p)}_{n}}y^{\area(P)}z^{\sper(P)}.
\end{equation}

The coefficient $x^ny^iz^j$ in the generating function  $F_p(x;y,z)$ corresponds to the number of $p$-Fibonacci polyominoes $P$ with $n$ columns such that $\area(P)=i$ and $\sper(P)=j$. 
In Theorem~\ref{teoareaper1}, we provide a rational expression for the generating function  $F_p(x;y,z)$.

\begin{theorem}\label{teoareaper1}
The generating function for the $p$-Fibonacci polyominoes with respect to
the length, semi-perimeter, and area 
is
\[F_p(x;y,z)=1+ \cfrac{\sum_{i=1}^{p}x^{p-i+1}y^{\frac12 (p + i) (p - i+1)}z^{2p-i+1}}{1 -xy^pz - \sum_{i=1}^{p-1}  x^{p-i+1}y^{\frac12 (p + i) (p-i+1)}z^{2p-2i+1} }.\]
\end{theorem}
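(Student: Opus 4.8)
The plan is to set up a refined generating function that tracks the number of cells in the last column, derive a linear system from the decomposition in Section 2.1, solve it, and then sum to recover $F_p(x;y,z)$. Concretely, for $1 \le i \le p$ I would define
\[
G_i(x;y,z) := \sum_{n\ge 1} x^n \sum_{P \in \B^{(p,i)}_n} y^{\area(P)} z^{\sper(P)},
\]
so that $F_p(x;y,z) = 1 + \sum_{i=1}^{p} G_i(x;y,z)$. The base case $n=1$ contributes a single column of $p$ cells, which has area $p$ and semi-perimeter $p+1$ (height $p$ plus one unit of width), giving the term $x y^p z^{p+1}$ inside $G_p$ and nothing in $G_i$ for $i<p$.

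Next I would translate the two cases of the decomposition into functional equations. Prepending a full column of $p$ cells to a polyomino $P'$ whose current last column has $i'$ cells: the new column always adds $p$ to the area and $1$ to the semi-perimeter for its width; for the height contribution to the semi-perimeter one must account for how the new column of height $p$ meets the old first column of height $i'$. Here the key bookkeeping fact is that gluing a column of height $p$ to the left of a bargraph whose leftmost column has height $i'$ increases the semi-perimeter by exactly $1 + (p - i')$ (the extra $1$ for width, and $p-i'$ for the exposed vertical step), since $p \ge i'$ always. So Case 1 ($1\le i\le p-1$, with $P' \in \B^{(p,i+1)}_{n-1}$, i.e. the old leftmost column... wait — one must be careful: the decomposition peels off the \emph{first} column, so the relevant interaction is between the new first column and the old first column, which had $i+1$ cells in Case 1 and $j$ cells in Case 2) yields a relation expressing $G_i$ in terms of $G_{i+1}$, and Case 2 yields $G_p$ in terms of $\sum_{j=1}^p G_j$. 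Writing $w := xy^p z$ for the "cost" of a new column (width part), the Case 1 relations read $G_i = w\, z^{\,p-(i+1)}\, G_{i+1}$ for $1 \le i \le p-1$, and the Case 2 relation reads $G_p = x y^p z^{p+1} + w \sum_{j=1}^{p} z^{\,p-j} G_j$.

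Then I would solve this triangular-plus-one-equation system. Iterating the Case 1 relations gives $G_i = \big(\prod_{k=i+1}^{p} w z^{\,p-k}\big) G_p = w^{\,p-i} z^{\,\binom{p-i+1}{2} \text{-type exponent}} G_p$; tracking the exponents of $x$, $y$, $z$ carefully, $G_i = x^{p-i} y^{p(p-i)} z^{e_i} G_p$ for an explicit $e_i$. Substituting all $G_i$ into the Case 2 equation gives a single linear equation for $G_p$, hence a closed form for $G_p$, and then $F_p = 1 + \sum_{i=1}^p G_i = 1 + G_p \sum_{i=1}^p x^{p-i}y^{p(p-i)}z^{e_i}$. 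The final step is to verify that, after reindexing $i \mapsto p-i+1$, the exponents collapse to $\tfrac12(p+i)(p-i+1)$ for the $y$-exponent and $2p-i+1$ (numerator) and $2p-2i+1$ (denominator) for the $z$-exponent, matching the claimed formula; a clean way to see the $y$-exponent is that the polyomino built from the word $p,p,\dots,p,i,i-1,\dots,1$ (the "heaviest" word ending in a block $i$) has area $p+p+\cdots = $ a triangular-number expression, and $\tfrac12(p+i)(p-i+1) = \binom{p+1}{2} - \binom{i}{2}$ counts exactly the cells removed relative to the all-$p$ column tail.

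The main obstacle is the semi-perimeter bookkeeping: one must prove cleanly that prepending a height-$p$ column to a bargraph with leftmost column of height $h \le p$ changes the semi-perimeter by precisely $1 + (p-h)$, and separately pin down the semi-perimeter of a length-one polyomino and the interaction in Case 2 where $j$ ranges over all of $1,\dots,p$. Everything else is a finite linear-algebra computation and an exponent-matching verification, both routine once the per-column increment lemma is nailed down. (The area increment is trivial — always exactly $p$ — so no difficulty there, and the $x$-grading is just "one new column = one factor of $x$.")
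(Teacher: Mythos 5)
Your overall strategy --- refining by the height of the last column, deriving a linear system from the two cases of the decomposition, solving the triangular part, and substituting into the equation for $G_p$ --- is exactly the paper's. But your Case~1 functional equation is wrong, and the error is fatal to the computation. You wrote $G_i = (xy^pz)\,z^{\,p-(i+1)}G_{i+1}$, i.e., you charge the step from $\B^{(p,i+1)}_{n-1}$ to $\B^{(p,i)}_{n}$ with a new column of $p$ cells and a semi-perimeter increment of $1+(p-(i+1))$. That is the wrong column: a recursion indexed by the \emph{last}-column height must peel off the \emph{last} column (prepending a first column cannot change the last-column index from $i+1$ to $i$, and removing the first column does not even stay inside the family, since the remaining word need not start with $p$). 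If the last column of $P$ has $i<p$ cells then, by the rule $u_{k+1}\in\{u_k-1,\,p\}$, the previous column has exactly $i+1$ cells, so $P$ is obtained from some $P'\in\B^{(p,i+1)}_{n-1}$ by appending a column of height $i$ on the right. That column contributes $y^{i}$ to the area (not $y^{p}$ --- your remark that ``the area increment is trivial, always exactly $p$'' is precisely where this goes astray) and, being shorter than its neighbour, contributes exactly $z^{1}$ to the semi-perimeter (your own increment lemma gives $1+\max(i-(i+1),0)=1$). The correct relation is $G_i=xy^{i}zG_{i+1}$. Your version already fails at $p=2$, $n=2$: the word $21$ has weight $x^2y^3z^4$, but your equations assign it $x^2y^4z^4$, giving total area $8$ instead of $a_2(2)=7$; and iterating your relation yields $y$-exponents $p(p-i+1)$ in the final formula rather than $\tfrac12(p+i)(p-i+1)$, so the promised ``exponent-matching verification'' cannot succeed.

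Your Case~2 equation is correct (there the appended column really has height $p$ and meets a column of height $j\le p$, so the increment $1+(p-j)$ applies), and your closing observation that $\tfrac12(p+i)(p-i+1)=\binom{p+1}{2}-\binom{i}{2}=p+(p-1)+\cdots+i$ is exactly the right sanity check: it is the area of a maximal decreasing block $p,p-1,\dots,i$, which is what accumulates when you iterate the \emph{correct} Case~1 relation $G_i=(xy^iz)(xy^{i+1}z)\cdots(xy^{p-1}z)G_p$ and then pick up the $y^p$ from Case~2. Replace the Case~1 relation accordingly, re-solve the (still triangular) system, and the claimed formula follows.
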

\begin{proof}
Let $P$ be a $p$-Fibonacci polyomino in $\B_n^{(p,i)}$.  If $n=0$, then $P$ is the empty polyomino and its contribution to the generating function is the term $1$. For $n\geq 1$ we have the following cases:

\begin{enumerate}
\item[\textbf{Case 1},] $1\leq i \leq p-1$.  According to the decomposition illustrated in Figure  \ref{p1} (left), the generating function for this case is given by:
\begin{align}\label{genfunteo1}
    F_{p,i}(x;y,z)= xy^izF_{p,i+1}(x;y,z), \quad 1\leq i \leq p-1.
\end{align}

 \item[\textbf{Case 2},]  $i=p$.  If $P$ has only one column ($n=1$), then $P$ consists of a single column with $p$ cells.  Its area and semi-perimeter are  $p$ and $p+1$, respectively. Thus, its contribution to the generating function is  $xy^pz^{p+1}$. For $n>1$, the decomposition illustrated in Figure \ref{p1} (right) leads to the following  equation:
 $$xy^p\sum_{j=1}^{p} z^{p+1-j}F_{p,i}(x;y,z).$$
 \end{enumerate}
Combining these results, we obtain the following system of equations:
\[\begin{cases}
F_{p}(x;y,z)&= 1 + \sum_{i=1}^pF_{p,i}(x;y,z),\\
F_{p,i}(x;y,z)&= xy^{i}zF_{p,i+1}(x;y,z), \quad 1\leq i\leq p-1, \\
F_{p,p}(x;y,z)&= xy^{p}z^{p+1}+xy^p\sum_{j=1}^{p} z^{p+1-j}F_{p,j}(x;y,z).    
\end{cases}
\]\label{thmarea}

From \eqref{genfunteo1},  we can express each generating function $F_{p,i}(x; y, z)$ in terms of $F_{p,p}(x; y, z)$ by iterating the recurrence relation:
\begin{align*}
    F_{p,i}(x; y, z) &= (xy^i z) (xy^{i+1} z) \cdots (xy^{p-1} z) F_{p,p}(x; y, z) \\
    &= x^{p-i}y^{\frac12 (p - i) (p - 1 + i)}z^{p-i}F_{p,p}(x; y, z).
\end{align*}
Now, substitute $F_{p,i}(x;y,z)$  into the equation for $F_{p,p}(x; y, z)$:
\begin{align*}
    F_{p,p}(x; y, z) &= xy^p z^{p+1} + xy^p\sum_{j=1}^{p-1}  z^{p+1-j} F_{p,j}(x; y, z) + xy^p z F_{p,p}(x; y, z) \\
    &= xy^p z^{p+1} + xy^p\sum_{j=1}^{p-1}  x^{p-j}y^{\frac12 (p - j) (p - 1 + j)}z^{2p-2j+1}F_{p,p}(x; y, z) + xy^p z F_{p,p}(x; y, z)\\
       &= xy^p z^{p+1} + \left(\sum_{j=1}^{p-1}  x^{p-j+1}y^{p+\frac12 (p - j) (p - 1 + j)}z^{2p-2j+1} + xy^pz\right)F_{p,p}(x; y, z).
\end{align*}
Therefore, 
\[    F_{p,p}(x; y, z) = \cfrac{xy^p z^{p+1}}{1 -xy^pz - \sum_{j=1}^{p-1}  x^{p-j+1}y^{\frac12 (p + j) (p-j+1)}z^{2p-2j+1} }.
\]
Substituting this into the formula for \( F_p(x; y, z) \), we obtain
\begin{align*}
    F_p(x; y, z) &= 1 + \sum_{i=1}^{p} F_{p,i}(x; y, z) \\
    &= 1+ F_{p,p}(x; y, z)  \sum_{i=1}^{p}x^{p-i}y^{\frac12 (p - i) (p - 1 + i)}z^{p-i}\\
    & = 1+ \cfrac{\sum_{i=1}^{p}x^{p-i+1}y^{\frac12 (p + i) (p - i+1)}z^{2p-i+1}}{1 -xy^pz - \sum_{i=1}^{p-1}  x^{p-i+1}y^{\frac12 (p + i) (p-i+1)}z^{2p-2i+1} }.    
\end{align*}
\end{proof}

For example, for $p=2, 3, 4$ we obtain the generating functions: 
\begin{align*}
    F_2(x; y, z)&=\frac{1 - x y^2 z + x y^2 z^3 - x^2 y^3 z^3 + x^2 y^3 z^4}{1 - x y^2 z - x^2 y^3 z^3}\\
    &=1 + y^2 z^3 x + (y^3 z^4 + y^4 z^4) x^2 + (y^5 z^5 + y^6 z^5 + 
    y^5 z^6) x^3 + O(x^4),\\\\
    F_3(x; y, z)&=\frac{1 - x y^3 z - x^2 y^5 z^3 + x y^3 z^4 + x^2 y^5 z^5 - x^3 y^6 z^5 + 
 x^3 y^6 z^6}{1 - x y^3 z - x^2 y^5 z^3 - x^3 y^6 z^5}\\
    &=1 + y^3 z^4 x + (y^5 z^5 + y^6 z^5) x^2 + (y^6 z^6 + y^8 z^6 + 
    y^9 z^6 + y^8 z^7) x^3 + O(x^4),\\\\
    F_4(x; y, z)&=\frac{1 - x y^4 z - x^2 y^7 z^3 + x y^4 z^5 - x^3 y^9 z^5 + x^2 y^7 z^6 + 
 x^3 y^9 z^7 - x^4 y^{10} z^7 + x^4 y^{10} z^8}{1 - x y^4 z - x^2 y^7 z^3 - x^3 y^9 z^5 - x^4 y^{10} z^7}\\
    &=1 + y^4 z^5 x + (y^7 z^6 + y^8 z^6) x^2 + \bm{(y^9 z^7 + y^{11} z^7 + 
    y^{12} z^7 + y^{11} z^8) x^3} + O(x^4).    
\end{align*}

The boldface term in the generating function $F_4(x;y,z)$, above, is illustrated in Figure~\ref{grafo2}, which depicts the perimeters and areas of the  $4$-Fibonacci  polyominoes with 3 columns.

\begin{figure}[ht]
\centering
\includegraphics{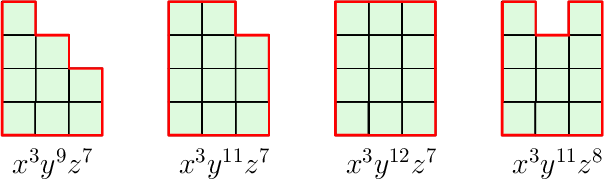}
\caption{Weights for $4$-Fibonacci  polyominoes with 3 columns.}
\label{grafo2}
\end{figure}

From Theorem~\ref{teoareaper1}, we can derive several results, including those related to the total area and semi-perimeter of $p$-Fibonacci polyominoes with a fixed number of columns.

\subsubsection{Total area of Fibonacci polyominoes}\hfill

Let $A_p(x)$ denote the generating function for the sequence $a_p(n)$,  which represents the total area of $p$-Fibonacci polyominoes with  $n$ columns.
Using the definition of  $F_p(x;y,z)$ in Equation (\ref{def:multiv_gen_fun}), we obtain:
\begin{align*}
A_p(x)&=\sum_{n\geq 0}a_p(n)x^n=\left.\frac{\partial F_p(x;y,1)}{\partial y}\right|_{y=1}=\frac{\sum_{i=1}^pi(2p - i + 1)x^i}{2(1-x-x^2-\cdots -x^p)^2}\\
&=\frac{x (p^2 (1-x)^2 x^p + 2 x (1 - x^p) - 
   p (1 - x) (2 - x^p + x^{p+1}))}{2 (-1 + x) (1 - 2 x + x^{p+1})^2}.
\end{align*}

In Table \ref{table3}, we present the first few values of the sequence 
$a_p(n)$. 
Note that the sequences corresponding to $n=2$ and $n=3$ where $p$ varies match OEIS entries $A004767$ and $A004170$, respectively.
Though for $n\geq 4$, these sequences do not appear in the OEIS.

\begin{table}[H]
\begin{center}
\begin{tabular}{|c|cccccccccc|} \hline
$p \backslash n$ & 1 & 2 & 3  & 4 & 5 & 6  & 7 & 8 & 9 &  10  \\ \hline
2 & 2& 7& 16& 35& 70& 136& 256& 473& 860& 1545  \\ \hline
 3 & 3& 11& 31& 73& 168& 370& 790& 1658& 3425& 6989  \\ \hline
 4 & 4& 15& 43& 111& 261& 602& 1350& 2966& 6414& 13714  \\ \hline
 5 & 5& 19& 55& 143& 351& 816& 1865& 4178& 9218& 20094   \\ \hline
\end{tabular}
\end{center}
\caption{Sequence $a_p(n)$ for $1\leq n \leq  10$ and $2\leq p \leq 5$.}
\label{table3}
\end{table}

\subsubsection{Number of Fibonacci polyominoes with fixed area}\label{fixed area}\hfill

Let $d_p(n)$ denote the number of $p$-Fibonacci polyominoes with an area of $n$. 
The generating function for the sequence $d_p(n)$ can be obtained from Theorem \ref{teoareaper1} by setting $z=1$ and $x=1$:
\[\sum_{n\geq 0}d_p(n)y^n=\frac{1}{1-\sum_{i=1}^p y^{\frac12 (p + i) (p-i+1)}}.\]

In Table \ref{tablecompositions}, we present the first few values of the sequence 
$d_p(n)$. 

\begin{table}[H]
\begin{center}
\begin{tabular}{|c|cccccccccc|} \hline
$p \backslash n$ & 1 & 2 & 3  & 4 & 5 & 6  & 7 & 8 & 9 &  10  \\ \hline
2 & 0& 1& 1& 1& 2& 2& 3& 4& 4& 7  \\ \hline
 3 & 0& 0& 1& 0& 1& 2& 0& 2& 3& 1  \\ \hline
 4 & 0& 0& 0& 1& 0& 0& 1& 1& 1& 1  \\ \hline
 5 & 0& 0& 0& 0& 1& 0& 0& 0& 1& 1   \\ \hline
\end{tabular}
\end{center}
\caption{Sequence $d_p(n)$ for $1\leq n\leq 10$ and $1\leq p \leq 5$.}
\label{tablecompositions}
\end{table}

Note that $d_p(n)$ coincides with the number of composition of $n$ using parts from the set
\[A_p\mathrel{:}=\left\{\frac12 (p + i) (p-i+1): 1 \leq i \leq p\right\}.\]
Recall that a \emph{composition} of a positive integer $n$ is a sequence of positive integers  $\sigma=(\sigma_1, \sigma_2,\ldots, \sigma_\ell)$ such that $\sigma_1+\sigma_2+\cdots +\sigma_\ell=n$.
The summands $\sigma_i$ are called \emph{parts} of the composition. 

\begin{theorem}\label{bijcom}
The set of $p$-generalized Fibonacci polyominoes of area $n$ are in bijection with the set of compositions of $n$ with parts in the set $A_p$.
\end{theorem}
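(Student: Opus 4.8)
The plan is to read off an explicit bijection from the ``descending block'' structure forced by the construction rules \eqref{constructionrules}. First I would observe that, starting from $u_1=p$, the symbol $p$ can reappear only either immediately after a $1$ (where it is forced) or as the ``reset'' alternative available from any symbol in $\{2,\dots,p\}$; in every other step the symbol decreases by exactly $1$. Hence each $p$-Fibonacci word $u=u_1\cdots u_n$ factors uniquely as a concatenation $u=B_{k_1}B_{k_2}\cdots B_{k_\ell}$ of maximal \emph{descending blocks} $B_k:=(p,p-1,\dots,p-k+1)$ with $1\le k\le p$; the blocks begin exactly at the positions carrying the symbol $p$, since no non-initial symbol of a block equals $p$. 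Conversely, the final symbol $p-k+1$ of a block is either $1$ (when $k=p$, which forces a $p$ next) or at least $2$ (which permits a $p$ next), so \emph{any} sequence $(k_1,\dots,k_\ell)\in\{1,\dots,p\}^{\ell}$ reassembles into an admissible $p$-Fibonacci word. Thus the word is recovered from its block-length sequence, and $p$-Fibonacci words of length $n$ correspond bijectively to compositions of $n$ with parts in $\{1,\dots,p\}$.

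Next I would pass from length to area. A block $B_k$ contributes columns of heights $p,p-1,\dots,p-k+1$, hence contributes $\sum_{t=0}^{k-1}(p-t)=\tfrac12 k(2p-k+1)$ to the area of the associated polyomino. Writing $i=p-k+1$ turns this into $\tfrac12(p+i)(p-i+1)$, so as $k$ runs over $\{1,\dots,p\}$ the block-areas run over precisely the set $A_p$. Moreover $k\mapsto\tfrac12 k(2p-k+1)$ is strictly increasing on $\{1,\dots,p\}$ — consecutive differences equal $p+1-k>0$ — so it restricts to a bijection onto $A_p$; let $\kappa\colon A_p\to\{1,\dots,p\}$ denote its inverse.

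Then I would define the map. Given a $p$-Fibonacci polyomino $P$ of area $n$ whose underlying word factors as $B_{k_1}\cdots B_{k_\ell}$, send $P$ to the composition $\Phi(P):=\big(\tfrac12 k_1(2p-k_1+1),\dots,\tfrac12 k_\ell(2p-k_\ell+1)\big)$. Each entry lies in $A_p$, and the entries sum to $\area(P)=n$ (the area of $P$ being the sum of all column heights, split blockwise), so $\Phi(P)$ is a composition of $n$ with parts in $A_p$. For the inverse, send a composition $(a_1,\dots,a_\ell)$ of $n$ with all $a_j\in A_p$ to the polyomino whose underlying word is $B_{\kappa(a_1)}\cdots B_{\kappa(a_\ell)}$; by the first paragraph this is a genuine $p$-Fibonacci word, and its area is $\sum_j a_j=n$. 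Uniqueness of the block factorization, together with the fact that $\kappa$ is a two-sided inverse of $k\mapsto\tfrac12 k(2p-k+1)$, shows that these two maps are mutually inverse, which completes the bijection.

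I do not anticipate a genuine obstacle here; the only points needing careful justification are (i) that the descending-block factorization is well defined and unique directly from \eqref{constructionrules}, and (ii) that an arbitrary block-length sequence reassembles into an admissible word, i.e.\ that the transition across each block boundary is legal — both of which reduce to a one-line inspection of the last symbol of a block. As a consistency check, the construction recovers the generating function $\sum_{n\ge 0}d_p(n)y^n=\big(1-\sum_{i=1}^{p}y^{(p+i)(p-i+1)/2}\big)^{-1}$ derived in Section~\ref{fixed area}, which is exactly the generating function for compositions with parts in $A_p$ once one notes (from the monotonicity above) that the $p$ exponents are pairwise distinct.
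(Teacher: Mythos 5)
Your proposal is correct and follows essentially the same route as the paper's proof: factor the underlying word into maximal descending blocks beginning with $p$, record the block sums as parts (which are exactly the elements of $A_p$), and invert by re-expanding each part into its descending run. You are somewhat more explicit than the paper about why the block factorization is unique, why any block sequence is admissible, and why $k\mapsto\tfrac12 k(2p-k+1)$ is injective, but the underlying bijection is identical.
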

\begin{proof}
Given a $p$-Fibonacci polyomino $P$ of area $n$, associated to the generalized Fibonacci word $u = u_1 u_2 \cdots u_k$, we construct a composition of $n$ with parts in  $A_p$ as follows. First, the word $\omega$ is read from left to right and divided into blocks, each starting with $p$ and consisting of the largest consecutive decreasing sequence of digits in $u$. The sum of the digits in each block is then recorded as a part of the composition. The resulting composition has its parts ordered according to the blocks of $u$.

For example, when \(p=3\) and \(n=11\), the word \(u = 32321\) is divided into the blocks \(32\) and \(321\). Summing the entries of these blocks yields the composition \(5 + 6\), where \(5 = 3+2\) and \(6 = 3+2+1\). Since the sum of the digits $u_1,\dots, u_k$ equals the area of the polyomino, the resulting composition is indeed a composition of $n$. Moreover, each part of the resulting composition is of the form  $$\sum_{\ell=0}^{j}(p-\ell)=\frac{1}{2}(j+1)(2p-j),$$ 
for some $0\leq j \leq p-1$. Taking $i=p-j$, we verify that $1\leq i \leq p$ and
$$\sum_{\ell=0}^{j}(p-\ell)=\frac{1}{2}(p-i+1)(p+i) \in A_p.$$

Conversely, to reconstruct a $p$-Fibonacci polyomino of area $n$  from a composition \(\sigma = \sigma_1 + \sigma_2 + \cdots + \sigma_k\), where each \(\sigma_i \in A_p\), we decompose each part \(\sigma_i\) into the sum of the largest consecutive decreasing sequence starting at \(p\). Specifically, for each \(\sigma_i\), we find the largest \(k\) such that 
\[
\sigma_i = \sum_{\ell=0}^k (p-\ell).
\]
This determines the sequence $p, p-1, \dots, p-k$. Concatenating these sequences in the order of the composition gives the desired \(p\)-generalized Fibonacci word representation of the polyomino.

For instance, given the composition \(\sigma = 6 + 5\) with \(p = 3\), the part \(6\) decomposes as \(6 = 3 + 2 + 1\), corresponding to the block \(321\), and the part \(5\) decomposes as \(5 = 3 + 2\), corresponding to the block \(32\). Concatenating these blocks yields the word \(u = 32132\).
 \end{proof}
 
\subsubsection{Total semi-perimeter of Fibonacci polyominoes}\hfill

Let $S_p(x)$ denote the generating function for the sequence $s_p(n)$,  which represents the total semi-perimeter of $p$-Fibonacci polyominoes with  $n$ columns. Therefore,
\begin{align*}
S_p(x)&=\sum_{n\geq 0}s_p(n)x^n=\left.\frac{\partial F_p(x;1,z)}{\partial z}\right|_{z=1}\\
&=\frac{p (1-x) x (1 - 2 x - 2 x^p + 3 x^{p+1}) - 
 x (1 - x^p) (-1 + x - x^2 + x^{p+2})}{(1 - x) (1 - 2 x + x^{p+1})^2}.
\end{align*}
In Table \ref{table3b} we show the first few values of the sequence $s_p(n)$. 
These sequences do not appear in the OEIS for $n\geq 3$.  
\begin{table}[H]
\begin{center}
\begin{tabular}{|c|cccccccccc|} \hline
$p \backslash n$ & 1 & 2 & 3  & 4 & 5 & 6  & 7 & 8 & 9 &  10  \\ \hline
2 & 3& 8& 16& 33& 63& 119& 219& 398& 714& 1269  \\ \hline
 3 & 4& 10& 25& 54& 118& 251& 521& 1071& 2176& 4380  \\ \hline
 4 & 5& 12& 29& 69& 152& 335& 727& 1557& 3297& 6931  \\ \hline
 5 & 6& 14& 33& 77& 177& 390& 856& 1859& 4001& 8545   \\ \hline
\end{tabular}
\end{center}
\caption{Sequence $s_p(n)$ for $1\leq n \leq  10$ and $2\leq p \leq 5$.}
\label{table3b}
\end{table}

\subsection{Inner points in the Fibonacci polyominoes} 

As with the area and semi-perimeter statistics, we define generating functions for the study of inner points as follows:
\[G_p(x;q):=\sum_{n\geq 0} x^n \sum_{P\in \B_{n}^{(p)}}q^{\inn(P)} \quad \text{and} \quad
G_{p,i}(x;q):=\sum_{n\geq 0} x^n \sum_{P\in \B_{n}^{(p,i)}}q^{\inn(P)}.\]

\begin{theorem}\label{teoareaper2}
The generating function for the $p$-Fibonacci polyominoes with respect to
the length and inner points 
is  given by
\[
    G_{p}(x;q)= 1 + \frac{\sum_{i=1}^{p} x^{p-i+1} q^{\frac 12 (p-i)(p+i-3)}}{1 - \sum_{i=1}^{p} x^{p-i+1} q^{\frac 12((p - i) (p + i - 3) + 2 (i - 1))}}.
    \]
    \end{theorem}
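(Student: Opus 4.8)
The plan is to mirror the proof of Theorem~\ref{teoareaper1}, replacing the area/semi-perimeter weight by a weight tracking inner points, and then solving the resulting linear system. First I would set up the column decomposition exactly as in Section~2.1: a $p$-Fibonacci polyomino $P\in\B_n^{(p,i)}$ is either a single column of $p$ cells (when $n=1$), or it begins with a column of $p$ cells followed by $P'$, where $P'\in\B_{n-1}^{(p,i+1)}$ if $1\le i\le p-1$, and $P'\in\B_{n-1}^{(p,j)}$ for any $1\le j\le p$ if $i=p$. The key preliminary step is to compute how $\inn$ changes when a leftmost column of height $p$ is prepended to a polyomino whose leftmost (existing) column has height $h$: the new inner points created are exactly the interior lattice points shared by the new column and its right neighbor, which sits $\min(p,h)-1 = h-1$ points high in the overlap, so prepending contributes $q^{h-1}$ to the inner-point weight (the single-column base case contributes $q^0=1$ since an isolated column has no inner points). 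I would state this as a short lemma or inline observation, since getting this offset right is the crux of the whole computation.

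With that in hand, the generating function identities read
\[
\begin{cases}
G_p(x;q) = 1 + \sum_{i=1}^p G_{p,i}(x;q),\\[2pt]
G_{p,i}(x;q) = x\, q^{\,i}\, G_{p,i+1}(x;q), \quad 1\le i\le p-1,\\[2pt]
G_{p,p}(x;q) = x + x\sum_{j=1}^{p} q^{\,j-1} G_{p,j}(x;q),
\end{cases}
\]
where in the first two lines the exponent $i$ comes from the new inner points formed against a neighbor of height $i+1$ (namely $(i+1)-1=i$), and in the last line the exponent $j-1$ comes from a neighbor of height $j$. Then, exactly as in the proof of Theorem~\ref{teoareaper1}, I would iterate the middle relation to express each $G_{p,i}$ in terms of $G_{p,p}$, obtaining $G_{p,i}(x;q) = x^{p-i} q^{\,i + (i+1) + \cdots + (p-1)} G_{p,p}(x;q) = x^{p-i} q^{\frac12(p-i)(p+i-1)} G_{p,p}(x;q)$. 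Substituting these into the $G_{p,p}$ equation gives a single linear equation for $G_{p,p}$, which I solve; plugging the solution and the expressions for $G_{p,i}$ into $G_p = 1 + \sum_i G_{p,i}$ yields a rational function.

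The last step is purely algebraic: reindex the sums (the substitution $i \mapsto p-i+1$, as in the earlier proof) and check that the exponents of $q$ match the claimed formula. In the numerator one should recover $\sum_{i=1}^p x^{p-i+1} q^{\frac12(p-i)(p+i-3)}$, where the shift from $\frac12(p-i)(p+i-1)$ (the exponent in $G_{p,i}$) down by $(i-1)$ comes from the factor $q^{\,i-1}$ picked up when column $i$ is itself extended on the left inside the final block, combined with the $x$ factor; and in the denominator the extra $+2(i-1)$ in the exponent accounts for the difference between the "interior" extension weight $q^{j-1}$ and the numerator's terminal weight. I expect the main obstacle to be precisely this bookkeeping of $q$-exponents — getting the inner-point offset right at the boundary between consecutive blocks of $p$-cell columns, and making sure the single-column and empty-polyomino base cases are weighted consistently — rather than anything structurally difficult; once the recurrence is correctly set up, the solution is a routine linear algebra computation identical in form to that of Theorem~\ref{teoareaper1}.
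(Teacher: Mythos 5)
Your overall strategy is exactly the paper's: set up the same last-column decomposition, track $\inn$ instead of area/semi-perimeter, and solve the resulting linear system for $G_{p,p}$. However, your middle recurrence is off by one in the exponent, and this is not a cosmetic slip. The number of inner points shared by two adjacent columns of heights $a$ and $b$ is $\min(a,b)-1$. In passing from $\B_{n-1}^{(p,i+1)}$ to $\B_{n}^{(p,i)}$ the new column has height $i$ and sits next to a column of height $i+1$, so it creates $\min(i,i+1)-1=i-1$ new inner points, giving $G_{p,i}=xq^{i-1}G_{p,i+1}$; you instead used the neighbor's height and wrote $G_{p,i}=xq^{i}G_{p,i+1}$. (Your own preliminary observation, $q^{\min(p,h)-1}=q^{h-1}$, is the $\min$ rule; you just applied it with the wrong column in the $\min$. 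Note also that your third equation correctly uses $q^{j-1}=q^{\min(p,j)-1}$, so your system is internally inconsistent about which rule governs a junction.) A quick sanity check at $p=2$ exposes the error: your system yields $G_2=1+x+2qx^2+\cdots$, whereas the two $2$-Fibonacci polyominoes of length $2$ (words $21$ and $22$) have $0$ and $1$ inner points, so the coefficient of $x^2$ must be $1+q$.

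The closing paragraph, where you try to reconcile your exponent $\tfrac12(p-i)(p+i-1)$ with the theorem's $\tfrac12(p-i)(p+i-3)$ by a "shift down by $(i-1)$," does not repair this: the actual difference between those two exponents is $p-i$, not $i-1$, and in any case no such adjustment step exists in the computation — once each $G_{p,i}$ is written as $x^{p-i}q^{e_i}G_{p,p}$, the numerator of $G_p-1$ is exactly $\sum_i x^{p-i+1}q^{e_i}$ because $G_{p,p}$ contributes only a factor of $x$ there. With the corrected recurrence one gets $e_i=(i-1)+i+\cdots+(p-2)=\tfrac12(p-i)(p+i-3)$, and both the numerator and the denominator of the stated formula then fall out directly, with no further bookkeeping.
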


\begin{proof}
Let $P$ be a $p$-Fibonacci polyomino with $n$ columns.  If $n=0$, then $P$ is the empty polyomino, and its contribution to the generating function is the term $1$. For $n\geq 1$, we can use a similar  argument as given in the proof of Theorem \ref{teoareaper1}. In this case, we  have the system of equations:
\[\begin{cases}
G_{p}(x;q)&= 1 + \sum_{i=1}^pG_{p,i}(x;q),\\
G_{p,i}(x;q)&= xq^{i-1}G_{p,i+1}(x;q), \quad 1\leq i\leq p-1, \\
G_{p,p}(x;q)&= x+\sum_{i=1}^{p}xq^{i-1}G_{p,i}(x;q).    
\end{cases}
\]
Solving this system of equations, we obtain the desired result. 
\end{proof}

For example, for $p=2, 3, 4$ we obtain the following generating functions: 
\begin{align*}
    G_2(x; q)&=\frac{1 + x(1 - q)}{1 - x(q+x)}\\
    &=1 + x + (1 + q) x^2 + (1 + q + q^2) x^3 + (1 + 2 q + q^2 + q^3) x^4 + O(x^5),\\\\
    G_3(x; q)&=\frac{1 + (1 - q^2) x + (1 - q) q x^2}{1 - q x^3 - q^2 x (1 + x)}\\
    &=1 + x + (q + q^2) x^2 + (q + q^2 + q^3 + q^4) x^3 + (q + 2 q^3 + 
    2 q^4 + q^5 + q^6) x^4 + O(x^5),\\\\
G_4(x; q)&=\frac{1 + (1 - q^3) x + q^2 (1 - q^2) x^2 + (1 - q) q^3 x^3}{1 - q^4 x^2 (1 + x) - q^3 (x + x^4)}\\
    &=1 + x + (q^2 + q^3) x^2 + \bm{(q^3 + q^4 + q^5 + q^6) x^3} + O(x^4).    
\end{align*}

The boldface term in the previous generating function, $G_4(x;q)$, is illustrated in Figure~\ref{grafo3}, which depicts the inner points (blue dots) of the  $4$-Fibonacci  polyominoes with 3 columns.

\begin{figure}[ht]
\centering
\includegraphics{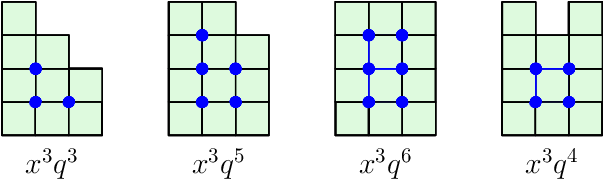}
\caption{Weights for $4$-Fibonacci  polyominoes with 3 columns.}
\label{grafo3}
\end{figure}

%%%%%%%%%%%%
\newpage
%%%%%%%%%%%%

\subsubsection{Total number of inner points of Fibonaci polyominoes} \hfill

Let $I_p(x)$ denote the generating function for the sequence $i_p(n)$,  which represents the total number of inner points of the $p$-Fibonacci polyominoes with  $n$ columns. Therefore,
\begin{align*}
I_p(x)&=\sum_{n\geq 0}i_p(n)x^n=\left.\frac{\partial G_p(x;q)}{\partial q}\right|_{q=1}\\
&=\frac{x ((6 - 4 p) x - (2 - 4 p) x^2 - (3 - p)p x^p - 
   2 (2- p)^2 x^{p+1} + (2 - 5 p + p^2)x^{p+2} + 2x^{2p+1})}{2 (-1 + x) (1 - 2 x + x^{p+1})^2}.
\end{align*}

In Table \ref{table4} we show the first few values of the sequence $i_p(n)$. These sequence do not appear in the OEIS.  
\begin{table}[H]
\begin{center}
\begin{tabular}{|c|cccccccccc|} \hline
$p \backslash n$ & 1 & 2 & 3  & 4 & 5 & 6  & 7 & 8 & 9 &  10  \\ \hline
2 & 0& 1& 3& 7& 15& 30& 58& 109& 201& 365  \\ \hline
 3 & 0& 3& 10& 26& 63& 143& 313& 668& 1398& 2883  \\ \hline
 4 & 0& 5& 18& 50& 124& 296& 679& 1517& 3325& 7184 \\ \hline
 5 & 0& 7& 26& 74& 190& 457& 1070& 2439& 5453& 12013  \\ \hline
\end{tabular}
\end{center}
\caption{Sequence $i_p(n)$, for $1\leq n \leq  10$ and $2\leq p \leq 5$.}
\label{table4}
\end{table}

\subsection{A Bijection}

\begin{theorem}\label{thm:binary}
The $p$-generalized Fibonacci words of length $n$ are in bijection with the binary words avoiding $p$ consecutive $1$'s of length $n-1$.
\end{theorem}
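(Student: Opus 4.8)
The plan is to exhibit an explicit length-decreasing bijection between $\mathcal{W}_n^{(p)}$ and the set of binary words of length $n-1$ avoiding $p$ consecutive $1$'s, using the construction rules \eqref{constructionrules} as the combinatorial engine. First I would set up the map $\phi$ from a $p$-Fibonacci word $u=u_1u_2\cdots u_n$ to a binary word $b=b_1b_2\cdots b_{n-1}$ by recording, at each step $i$ with $1\le i\le n-1$, whether the transition from $u_i$ to $u_{i+1}$ was a "reset" or a "descent": set $b_i=0$ if $u_{i+1}=p$ and $b_i=1$ if $u_{i+1}=u_i-1$. Because $u_1=p$ always, the first symbol carries no information and is dropped, which accounts for the length drop from $n$ to $n-1$. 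I would then check that $\phi(u)$ avoids $p$ consecutive $1$'s: a run of $k$ consecutive $1$'s in $b$ starting at position $i$ forces $u_i,u_{i+1},\dots,u_{i+k}$ to be a strictly decreasing run $u_i, u_i-1, \dots, u_i-k$; since every entry is at least $1$ and at most $p$, such a run has length at most $p-1$ descents, i.e. $k\le p-1$, so $p$ consecutive $1$'s cannot occur.

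Next I would construct the inverse map $\psi$. Given a binary word $b=b_1\cdots b_{n-1}$ avoiding $p$ consecutive $1$'s, define $u_1=p$ and, for $1\le i\le n-1$, set $u_{i+1}=p$ if $b_i=0$ and $u_{i+1}=u_i-1$ if $b_i=1$. The only thing to verify for well-definedness is that $u_{i+1}=u_i-1$ never produces a value below $1$; but $u_i-1\ge 1$ fails only if $u_i=1$, and $u_i=1$ arises only at the end of a maximal run of $1$'s in $b$ of length exactly $p-1$ (starting from a reset-to-$p$ or the initial $p$), after which the avoidance hypothesis guarantees $b_i=0$, so we reset rather than descend. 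Hence $\psi(b)$ satisfies \eqref{constructionrules} and lies in $\mathcal{W}_n^{(p)}$. It is immediate from the definitions that $\phi$ and $\psi$ are mutually inverse, since each recovers the transition data the other recorded.

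The main obstacle is the bookkeeping around the boundary value $u_i=1$: one must be careful that the "descent" rule in \eqref{constructionrules} is only available when $2\le u_i\le p$, and correspondingly that a maximal block of $1$'s in the binary word has length at most $p-1$ and is the exact certificate that the descending run in $u$ bottoms out at $1$ and is then forced to reset. Making this correspondence between maximal runs of $1$'s in $b$ and maximal descending blocks in $u$ precise — and confirming it is consistent at both ends of the word (the forced initial $u_1=p$, and the unconstrained final symbol) — is the only delicate point; everything else is a routine check that the two maps invert each other. As a sanity check one can verify that $|\mathcal{W}_n^{(p)}|=F_{p,n+1}$ equals the number of such binary words, which is the classical interpretation recalled in the introduction (cf.\ \cite{Bernini}), so the bijection is consistent with the known enumeration.
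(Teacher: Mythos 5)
Your proposal is correct and takes essentially the same approach as the paper: encode each transition of the Fibonacci word as a bit ($1$ for a descent, $0$ for a reset to $p$), note that $p$ consecutive $1$'s would force a descent below $1$, and invert by replaying the transition data from the forced initial symbol $p$. If anything, your write-up is the more careful of the two, since it keeps one consistent bit convention in both directions and explicitly justifies why the inverse map never produces a value below $1$.
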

\begin{proof}
For a given generalized Fibonacci word $u=u_1u_2\cdots u_n$, we construct a binary word of length $n-1$ by examining the increases and decreases in $u$. Reading the word from left to right, we assign $0$ for increases or unchanged digits and $1$ for decreases. For example, the word $32323$ corresponds to the binary word $1010$. This binary word avoids $p$ consecutive occurrences of the digit 1, as each word has at most $p-1$ ascents. Furthermore, the word obtained is unique because the words are fully determined by the increases and decreases.
 
Let $w$ be a binary word of length $n-1$ that avoids $p$ consecutive occurrences of the digit $1$. Since each generalized Fibonacci word starts with an occurrence of the symbol $p$,  we construct the generalized Fibonacci word by appending the allowed descending digit at the right when there is a $0$ and the allowed ascending or unchanged digit when there is a 1. For instance, the word $1011011$ that avoids 3 consecutive occurrences of 1 generates the $3$-generalized Fibonacci word of length 8, $u=33233233$.  By construction the word generated is a generalized Fibonacci word and is unique since it is fully determined by the increases and decreases once we set the first digit of $u$ to be the digit $p$.
\end{proof}

Now that we have established a bijection between $p$-generalized Fibonacci words of length $n$ and binary words avoiding $p$ consecutive $1$'s of length $n-1$. We conclude by observing that the subset of $p$-generalized Fibonacci words of length $n$ or equivalently the $p$-generalized Fibonacci polyominoes with area $n$ (discussed in Subsubsection \ref{fixed area})  are also in bijection with a subset of generating trees (as introduced in Section \ref{intro} and seen in Figure \ref{generatingtree3wors}), compositions (Theorem \ref{bijcom}), and binary words (following Theorem \ref{thm:binary})  as exemplified in the following figure. 

\begin{figure}[ht!]
\centering
\includegraphics[scale=0.8]{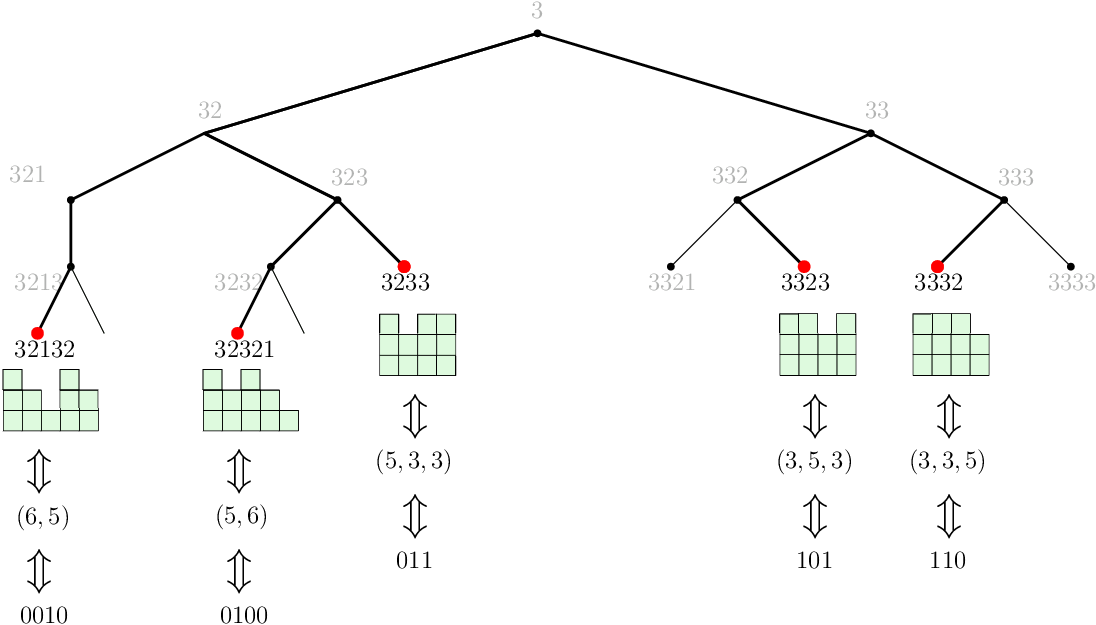}
\caption{Family of Bijections.}
\label{Bij}
\end{figure}

\section*{Acknowledgements}
The first two authors were partially supported by Universidad Nacional de Colombia, Project No. 57340.

%%%%%%%%%%%%%%%%%%%%%%%%%%%%%%%%%%%%%%%

\bibliographystyle{amsplain}
\bibliography{bibliography}

%%%%%%%%%%%%%%%%%%%%%%%%%%%%%%%%%%%%%%%

\end{document}